\newcommand{\N}{\mathbb{N}}
\newcommand{\R}{\mathbb{R}}
\newtheorem{defn}{Definition}[section]
\newtheorem{prop}{Proposition}[section]
\newtheorem{thm}{Theorem}[section]
\newtheorem{example}{Example}[section]
\providecommand{\keywords}[1]{\textbf{\textit{Keywords:}} #1}
\begin{document}


\title{On characterization of locally $L^0$-convex topologies induced by a family of $L^0$-seminorms}

\author{José M. Zapata}

\date{\today}
\maketitle


\begin{abstract}
The purpose of this paper is to provide a characterization of locally $L^0$-convex modules induced by a family of $L^0$-seminorms using the gauge function for $L^0$-modules. Taking advantage of these ideas we will give a counterexample of a locally $L^0$-convex module whose topology is not induced by a family of $L^0$-seminorms.
\end{abstract}

\keywords{$L^0$-modules, locally $L^0$-convex modules, gauge function, countable concatenation closure.}
 

\section*{Introduction}

In \cite{key-1}, motivated by the financial applications, Filipovic, Kupper and Vogelpoth try to provide an appropriate theoretical framework in order to study the conditional risk measures and develop the classical convex analysis for topological $L^0$-modules.

To this end, they introduce the gauge function for $L^0$-modules and, in the same way as in the convex analysis, they claim that a topological $L^0$-module is locally $L^0$-convex if and only if its topology is induced by a family of $L^0$-seminorms.

Nevertheless, in \cite{key-2} T. Guo, S. Zhao and X. Zeng warn that there is a hole in the proof and introduce some theoretical considerations.

In this paper, we go further and provide a characterization  of locally $L^0$-convex modules induced by a family of $L^0$-seminorms.
Finally, taking advantage of these ideas, we will give a counterexample of a locally $L^0$-convex module whose topology cannot be induced by any family of $L^0$-seminorms.
   
\section{Some  important concepts}

Given a probability space $\left(\Omega,\mathcal{F},P\right)$, which will be fixed for the rest of this paper, we consider the set $L^{0} \left(\Omega,\mathcal{F},P\right)$, the set of equivalence classes of  real valued $\mathcal{F}$-measurable random variables, which will be denoted simply as $L^{0}$.

It is known that the triple $\left(L^{0},+,\cdot\right)$ endowed with the partial order of the almost sure dominance is a lattice ordered ring.

We say ``$X\geq Y$`` if $P\left( X\geq Y \right)=1$.
Likewise, we say ``$X>Y$'', if $P\left( X> Y \right)=1$. 

And, given $A\in \mathcal{F}$, we say that $X>Y$ (respectively,  $X \geq Y$) on $A$, if $P\left(X>Y \mid A\right)=1$ (respectively , if $P\left(X \geq Y \mid  A \right)=1$).

We also define 

\[
L_{+}^{0}:=\left\{ Y\in L^{0};\: Y\geq 0\right\} 
\]

\[
L_{++}^{0}:=\left\{ Y\in L^{0};\: Y>0  \right\}. 
\]

We can also define the set $\bar{L^{0}}$, the set of equivalence classes of  $\mathcal{F}$-measurable random variables taking values in $\bar{\R}=\R\cup\{\pm\infty\}$, and extend the partial order of the almost sure dominance to $\bar{L^{0}}$. 

~\\

Let us see below, some notions and results that will be used in the development of this paper

~\\

In A.5 of \cite{key-3} is proved the proposition below

\begin{prop}
Let $\phi$  be a subset of $L^{0}$, then
\begin{enumerate}
\item There exists $Y^{*}\in\bar{L^{0}}$ such that $Y^{*}\geq Y$ for all 
$Y\in\phi,$ and such that any other $Y'$ satisfying the same, verifies $Y'\geq Y^{*}.$
\item Suppose that  $\phi$ is directed upwards. Then there exists a non-decreasing sequence  $Y_{1}\leq Y_{2}\leq...$ in $\phi,$ such that 
 $Y_{n}$ converges to $Y^{*}$ almost surely.
\end{enumerate}
\end{prop}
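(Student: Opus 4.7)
The plan is to prove (1) by the classical exhaustion argument for essential suprema, and then extract (2) from the same construction by using upward directedness to replace a countable sup by a non-decreasing sequence in $\phi$.

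For (1), the first move is to reduce to a bounded setting. Fix a strictly increasing continuous bijection $\varphi:\bar{\R}\to[-1,1]$ (for instance $\varphi(t)=t/(1+|t|)$ extended by $\pm 1$ at $\pm\infty$) and replace $\phi$ by $\varphi(\phi):=\{\varphi(Y):Y\in\phi\}$. Since $\varphi$ preserves the almost-sure order, any essential supremum of $\varphi(\phi)$ transports back through $\varphi^{-1}$ to an essential supremum of $\phi$. So from now on I may assume $|Y|\le 1$ for every $Y\in\phi$, and expectations of countable suprema are well defined and bounded by $1$.

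Now set
\[
s:=\sup\Bigl\{\E\bigl[\sup_{Y\in C}Y\bigr]\ :\ C\subseteq\phi,\ C\ \text{countable}\Bigr\},
\]
which is finite. Pick countable $C_n\subseteq\phi$ with $\E[\sup_{Y\in C_n}Y]\to s$, let $C:=\bigcup_n C_n$ (still countable), and define $Y^*:=\sup_{Y\in C}Y\in\bar{L^0}$; by monotone convergence the supremum defining $s$ is actually attained at $C$. I then verify the two required properties. For the upper bound $Y^*\ge Y$ on all of $\phi$: if some $Y\in\phi$ had $P(Y>Y^*)>0$, then $C\cup\{Y\}$ is still countable and $\E[\sup_{Z\in C\cup\{Y\}}Z]=\E[\max(Y^*,Y)]>\E[Y^*]=s$, contradicting the definition of $s$. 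The minimality property is immediate: any $Y'\in\bar{L^0}$ majorizing every element of $\phi$ majorizes every element of the countable set $C$, hence $Y'\ge \sup_{Y\in C}Y=Y^*$.

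For (2), suppose $\phi$ is upward directed, and enumerate $C=\{W_1,W_2,\dots\}$ from the proof of (1). Define $Y_n\in\phi$ inductively by $Y_1:=W_1$ and, using upward directedness, choose $Y_{n+1}\in\phi$ with $Y_{n+1}\ge Y_n$ and $Y_{n+1}\ge W_{n+1}$. Then $(Y_n)$ is non-decreasing in $\phi$, and $\sup_n Y_n\ge \sup_n W_n=Y^*$ while $\sup_n Y_n\le Y^*$ by (1); since $(Y_n)$ is monotone, $Y_n\to Y^*$ almost surely.

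The main technical care is in the maximality step of (1), namely ensuring that adding a single violating $Y$ to $C$ strictly increases the expectation; this is where the bounded reduction at the start is essential, since without it neither $s$ nor the strict inequality between expectations would be meaningful. Everything else is a routine countable construction.
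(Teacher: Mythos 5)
Your proof is correct and is essentially the standard argument that the paper relies on: the paper gives no proof of its own but cites A.5 of F\"ollmer--Schied, whose proof is exactly your exhaustion scheme (reduce to bounded variables via a strictly increasing bijection, maximize $\E\bigl[\sup_{Y\in C}Y\bigr]$ over countable $C\subseteq\phi$, and use upward directedness to monotonize the resulting countable family for part (2)). No gaps; the bounded reduction correctly handles the only delicate point, the strict increase of expectation in the maximality step.
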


\begin{defn}
Under the conditions of the previous proposition, 
$Y^{*}$ is called essential supremum of $\phi$, and we write

\[
ess.sup\,\phi=\underset{Y\in\phi}{ess.sup\, Y}:=Y^{*}
\]

The essential infimum of $\phi$ is defined as

\[
ess.inf\,\phi=\underset{Y\in\phi}{ess.inf\, Y}:=\underset{Y\in\phi}{-ess.sup\,\left(-Y\right)}
\]
\end{defn}

The order of the almost sure dominance also lets us define a topology on $L^{0}$. Let us define

\[
B_{\varepsilon}:=\left\{ Y\in L^{0};\:\left|Y\right|\leq\varepsilon\right\} 
\]

the ball of radius $\varepsilon\in L_{++}^{0}$ centered at $0\in L^{0}$.
Then, for all $Y\in L^{0}$, $\mathcal{\mathcal{U}}_{Y}:=\left\{ Y+B_{\varepsilon};\:\varepsilon\in L_{++}^{0}\right\} $ is a neighborhood base of $Y$. Thus, it can be defined a topology on $L^{0}$ that it will be known as the topology induced by $\left|\cdot\right|$ and $L^{0}$ endowed with this topology will be denoted by $L^{0}\left[\left|\cdot\right|\right]$. 

\begin{defn}
A topological $L^{0}$-module $E\left[\tau\right]$ is a $L^{0}$-module $E$ endowed with a topology $\tau$ such that 
\begin{enumerate}
\item $E\left[\tau\right]\times E\left[\tau\right]\longrightarrow E\left[\tau\right],\left(X,X'\right)\mapsto X+X'$
and
\item $L^{0}\left[\left|\cdot\right|\right]\times E\left[\tau\right]\longrightarrow E\left[\tau\right],\left(Y,X\right)\mapsto {Y}X$
\end{enumerate}
are continuous with the corresponding product topologies.
\end{defn}

\begin{defn}
A topology $\tau$ on a $L^{0}$-module $E$ is a locally $L^{0}$-convex module 
 if there is a neighborhood base of $0\in{E}$ $\mathcal{U}$ such that each $U\in\mathcal{U}$ is
\begin{enumerate}
\item $L^{0}$-convex, i.e. ${Y}X_{1}+{(1-Y)}X_{2}\in U$ for all $X_{1},X_{2}\in U$
and $Y\in L^{0}$ with $0\leq Y\leq1,$
\item $L^{0}$-absorbent, i.e. for all $X\in E$ there is a $Y\in L_{++}^{0}$
such that $X\in {Y}U,$
\item $L^{0}$-balanced, i.e. ${Y}X\in U$ for all $X\in U$ and $Y\in L^{0}$
with $\left|Y\right|\leq1.$

In this case, $E\left[\tau\right]$ is a locally $L^0$-convex module. 
\end{enumerate}
\end{defn}

\begin{defn}
A function $\left\Vert \cdot\right\Vert :E\rightarrow L_{+}^{0}$
is a $L^{0}$-seminorm on $E$ if:
\begin{enumerate}
\item $\left\Vert {Y}X\right\Vert =\left|Y\right|\left\Vert X\right\Vert $
for all $Y\in L^{0}$ y $X\in E.$
\item $\left\Vert X_{1}+X_{2}\right\Vert \leq\left\Vert X_{1}\right\Vert +\left\Vert X_{2}\right\Vert ,$
for all $X_{1},X_{2}\in E.$

If, moreover

\item $\left\Vert X\right\Vert =0$ implies $X=0,$
\end{enumerate}
Then $\left\Vert \cdot\right\Vert $ is a $L^{0}$-norm on
$E$

\end{defn}
\begin{defn}
Let $\mathcal{P}$ be a family of $L^0$-seminorms on a $L^{0}$-module
$E$. Given $Q\subset\mathcal{P}$ finite and $\varepsilon\in L_{++}^{0},$
we define 

\[
U_{Q,\varepsilon}:=\left\{ X\in E;\:\underset{\left\Vert .\right\Vert \in Q}{\sup}\left\Vert X\right\Vert \leq\varepsilon\right\} .
\]
Then for all $X\in E$, $\mathcal{\mathcal{U}}_{Q,X}:=\left\{ X+U_{\varepsilon};\:\varepsilon\in L_{++}^{0},\: Q\subset \mathcal{P}\: {finite} \right\} $
is a neighborhood base of $X$. Thereby, we define a topology
on $E$, which it will be known as the topology induced by $\mathcal{P}$
and $E$ endowed with this topology will be denoted by $E\left[\mathcal{P}\right]$.

Furthermore, it is proved by the lemma 2.16 of \cite{key-1}
 that $E\left[\mathcal{P}\right]$ is a locally $L^0$-convex module.
\end{defn}

\section{The gauge function and the countable concatenation closure.}

Let us write the notion of gauge function given in \cite{key-1}:  

\begin{defn}
Let $E$ be a $L^0$-module. The gauge function $p_{K}:E\rightarrow\bar{L}_{+}^{0}$ of
a set $K\subset E$ is defined by 

\[
p_{K}\left(X\right):=ess.inf\left\{ Y\in L_{+}^{0};\: X\in YK\right\} .
\]

\end{defn}

In addition, in \cite{key-1} the properties below are proved:

\begin{prop}
\label{prop: gaugeConAbs}
The gauge function $p_{K}$ of a $L^{0}$-convex and $L^{0}$-absorbent
$K\subset E$ satisfies:
\begin{enumerate}
\item $1_{A}p_{K}\left(1_{A}X\right)=1_{A}p(X),$ for all $A\in\mathcal{F}$
and $X\in E$.
\item $p_{K}\left(X\right)=ess.inf\left\{ Y\in L_{++}^{0};\: X\in Y{K}\right\} $
for all $X\in E$.
\item ${Y}p_{K}(X)=p_{K}(Y{X})$ for all $X\in E$ and $Y\in L_{+}^{0}$
\item $p_{K}(X+Y)\leq p_{K}(X)+p_{K}(Y)$ for $X,\, Y\in E$.
\item For all $X\in E$ there exists a sequence $\left\{ Z_{n}\right\} $
in $L_{++}^{0}$ such that $Z_{n}\searrow p_{K}\left(X\right)$ almost
surely and such that $X\in Z_{n}K$ for all $n$.
\item If in addition, $K$ is $L^{0}$-balanced then $p_{K}\left(Y{X}\right)=\left|Y\right|p_{K}\left(X\right)$
for all $Y\in L^{0}$ and $X\in E$.
\end{enumerate}
In particular, $p_{K}$ is an $L^{0}$-seminorm.
\end{prop}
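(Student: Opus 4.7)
My plan begins with two preparatory observations that underpin every item. First, the $L^{0}$-absorbency of $K$ forces $0 \in K$ (since $0 \in YK$ for some $Y \in L^{0}_{++}$ yields $0 = Yk$ with $k \in K$, hence $k = 0$), so $L^{0}$-convexity gives the pasting rule $1_{A} k_{1} + 1_{A^{c}} k_{2} \in K$ for all $A \in \mathcal{F}$ and $k_{1}, k_{2} \in K$. Second, the set $S(X) := \{ Y \in L^{0}_{+} : X \in YK \}$ is directed downward: given witnesses $X = Z_{i} k_{i}$ for $i = 1, 2$ and $A := \{Z_{1} \leq Z_{2}\}$, the pasted element $1_{A} k_{1} + 1_{A^{c}} k_{2}$ lies in $K$ and realises $X \in (Z_{1} \wedge Z_{2}) K$. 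By Proposition 1.1, $p_{K}(X) = \mathrm{ess.inf}\, S(X)$ is therefore the almost-sure limit of a decreasing sequence in $S(X)$, which is the engine powering every remaining step.

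For (1), $X = Zk$ gives $1_{A} X = Z(1_{A} k)$ with $1_{A} k \in K$ by the pasting rule applied to $k$ and $0$, proving $p_{K}(1_{A} X) \leq p_{K}(X)$; conversely, any witness $1_{A} X = Zk$ can be combined with an auxiliary witness $X = Z_{0} k_{0}$ into $X = (1_{A} Z + 1_{A^{c}} Z_{0})(1_{A} k + 1_{A^{c}} k_{0})$, yielding $1_{A} p_{K}(X) \leq 1_{A} Z$, and the reverse inequality follows on passing to ess.inf. For (2), if $X = Zk$ and $\varepsilon \in L^{0}_{++}$, then $X = (Z + \varepsilon) \cdot \frac{Z}{Z + \varepsilon} k$ with $\frac{Z}{Z + \varepsilon} k \in K$ (a convex combination of $k$ and $0$), so $Z + 1/n \in S(X) \cap L^{0}_{++}$ and the two essential infima coincide. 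For (3), in the strict case $Y \in L^{0}_{++}$ the map $Z \mapsto YZ$ is a bijection $S(X) \to S(YX)$; for a general $Y \in L^{0}_{+}$, set $A := \{Y > 0\}$, use (1) together with $p_{K}(0) = 0$ to see that both sides vanish on $A^{c}$, and on $A$ replace $Y$ by $\tilde Y := Y + 1_{A^{c}} \in L^{0}_{++}$, exploiting $1_{A} \tilde Y X = YX$ to reduce to the strict case.

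Item (4) follows from the $L^{0}$-convex combination $X_{1} + X_{2} = (Z_{1} + Z_{2})\bigl(\frac{Z_{1}}{Z_{1}+Z_{2}} k_{1} + \frac{Z_{2}}{Z_{1}+Z_{2}} k_{2}\bigr)$ for $Z_{i} \in S(X_{i}) \cap L^{0}_{++}$, after which one passes to decreasing sequences via the directedness of each $S(X_{i}) \cap L^{0}_{++}$. Item (5) is then just Proposition 1.1 applied to $S(X) \cap L^{0}_{++}$, whose directedness was established in the preparatory step and whose membership in $L^{0}_{++}$ is (2). For (6), if $|Y| \leq 1$ then $X = Zk$ gives $YX = Z(Yk) \in ZK$ by $L^{0}$-balancedness, so $p_{K}(YX) \leq p_{K}(X)$; applying this to $W := 1_{\{Y \neq 0\}} Y / |Y|$ (with $|W| \leq 1$, $W \cdot |Y| X = YX$ and $W \cdot YX = |Y| X$) in both directions yields $p_{K}(YX) = p_{K}(|Y| X) = |Y| p_{K}(X)$ via (3). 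The genuinely delicate part throughout is bookkeeping how $1_{A}$ and scalar multiplication by elements of $L^{0}_{+}$ commute with ess.inf; the directedness fact from the preparatory step lets every such step be read off from a monotone limit of sequences, sidestepping any measurable-selection subtlety and explaining why the argument sketched in \cite{key-1} can be completed precisely once $L^{0}$-convexity with $0 \in K$ and downward directedness are isolated up front.
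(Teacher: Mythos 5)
Your proof is correct, but there is nothing in this paper to compare it against: the paper states Proposition \ref{prop: gaugeConAbs} without proof, quoting it from \cite{key-1}, and your argument is essentially a sound, self-contained reconstruction of the standard proof from that reference — downward directedness of $S(X)=\{Y\in L^0_+:\,X\in YK\}$ via the pasting rule $1_Ak_1+1_{A^c}k_2\in K$, reduction of every exchange of $1_A$ or $L^0_{++}$-multiplication with $ess.inf$ to monotone sequences through Proposition 1.1, and the localization $\tilde Y=Y+1_{A^c}$ to pass from $L^0_{++}$ to $L^0_+$ in item (3). All steps check out, including the deduction $0\in K$ from absorbency (which uses invertibility of $Y\in L^0_{++}$ in $L^0$); the only point left implicit is that the concluding seminorm claim also needs $p_K(X)\in L^0_+$ to be finite, which is immediate since absorbency makes $S(X)$ nonempty and elements of $L^0_{++}$ are almost surely real-valued.
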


We also have the next result (see 2.4 of \cite{key-1} and 2.22 of \cite{key-2}):

\begin{prop}
The gauge function $p_{U}$ of a $L^{0}$-convex and $L^{0}$-absorbent set $U\subset$ satisfies: 
\begin{enumerate}
\item $p_{U}(X)\geq1$ on $B$ for all $X\in E$ with $1_{A}X\notin1_{A}U$
for all $A\in\mathcal{F}$ with $P(A)>0$, $A\subset B$.
\item If in addition, $E$ is a locally $L^{0}$-convex module, then
\[
\overset{0}{U}\subset\left\{ X\in E;\: p_{U}(X)<1\right\} 
\]
\end{enumerate}
\end{prop}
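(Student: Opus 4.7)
The plan is to handle the two items separately, both relying on Proposition~\ref{prop: gaugeConAbs} and the $L^0$-module structure of $U$.

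For item~1, I would argue by contradiction. Suppose $C := B \cap \{p_U(X) < 1\}$ has positive probability. By item~5 of Proposition~\ref{prop: gaugeConAbs}, there exists $\{Z_n\} \subset L^0_{++}$ with $Z_n \searrow p_U(X)$ almost surely and $X \in Z_n U$ for every $n$. Since on $C$ the sequence $Z_n$ converges to a value strictly less than $1$, I can select an $n_0$ such that $A := C \cap \{Z_{n_0} < 1\}$ still has positive probability. Writing $X = Z_{n_0} U_0$ with $U_0 \in U$ and observing that $0 \in U$ (applying $L^0$-absorbency to $X=0$ gives $0 = Yu$ with $Y \in L^0_{++}$, forcing $u = 0$), the inequalities $0 \leq 1_A Z_{n_0} \leq 1$ and $L^0$-convexity yield
\[
1_A X \;=\; 1_A Z_{n_0} \cdot U_0 + (1 - 1_A Z_{n_0}) \cdot 0 \in U.
\]
Multiplying by $1_A$ then gives $1_A X = 1_A(1_A X) \in 1_A U$, contradicting the hypothesis since $A \subset B$ and $P(A) > 0$. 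Thus $p_U(X) \geq 1$ on $B$.

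For item~2, let $X \in \overset{0}{U}$. By continuity of scalar multiplication $L^0[|\cdot|] \times E[\tau] \to E[\tau]$ at $(1,X)$, there exist $\varepsilon \in L^0_{++}$ and a neighborhood $V$ of $X$ such that $YV \subset U$ whenever $|Y - 1| \leq \varepsilon$; in particular $(1+\varepsilon)X \in U$. Hence $X \in \tfrac{1}{1+\varepsilon}\,U$, with $\tfrac{1}{1+\varepsilon} \in L^0_{++}$ and $\tfrac{1}{1+\varepsilon} < 1$, so by definition $p_U(X) \leq \tfrac{1}{1+\varepsilon} < 1$.

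The main obstacle, as I see it, is the measurable selection step in item~1: passing from the almost-sure convergence $Z_n \searrow p_U(X)$ to a single index $n_0$ for which $\{Z_{n_0} < 1\} \cap C$ has positive measure, and then using $0 \in U$ together with $L^0$-convexity to localize the containment $X \in Z_{n_0}U$ down to $1_A X \in 1_A U$. Item~2 is comparatively routine, relying only on the continuity of scalar multiplication and the very definition of $p_U$.
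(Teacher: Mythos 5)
Your proposal is correct, and it follows essentially the argument behind this result in the cited sources (the paper itself states this proposition without proof, referring to \cite{key-1} and \cite{key-2}): item 1 via the approximating sequence $Z_n\searrow p_U(X)$ from Proposition \ref{prop: gaugeConAbs}, localization to a positive-measure set $A\subset B$ where $Z_{n_0}<1$, and $L^0$-convexity with $0\in U$; item 2 via continuity of scalar multiplication at $(1,X)$ giving $(1+\varepsilon)X\in U$ and hence $p_U(X)\leq\frac{1}{1+\varepsilon}<1$. Both steps, including the measurable selection of $n_0$ by continuity from below and the deduction $0\in U$ from $L^0$-absorbency, are carried out correctly.
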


Proceeding in the same way as the classical convex analysis, given a $L^{0}$-convex, $L^{0}$-absorbent and $L^{0}$-balanced set $U\subset E$, one can expect that $\left\{ X\in E;\: p_{U}(X)<1\right\} \subset U$ holds. If this held, we could prove that any topological $L^0$-module is locally $L^0$-convex module if and only if its topology is induced by a family of $L^0$-seminorms.

Not in vain, this statement is set as valid in theorem 2.4 given in \cite{key-1}.

However, in \cite{key-3} the authors point out that the proof of theorem 2.4 given in \cite{key-1} has a hole and conjecture that, according to their observations, in  general the topology of a locally $L^0$-convex module is not necessary induced by a family of $L^0$-seminorms, but no counterexample is given.

We go further and provide an example (see \ref{contraejemplo}) of a locally $L^0$-convex module, whose topology is not induced by any family of $L^0$-seminorms. Therefore, the theorem 2.4 given in \cite{key-1} does not hold.

In addition, this example shows that there exists a $L^{0}$-convex, $L^{0}$-absorbent and $L^{0}$-balanced set $U\subset E$ such that $\left\{ X\in E;\: p_{U}(X)<1\right\} \nsubseteq U$.

~\\

Let us introduce some notation:

Given a $L^0$-module $E$, we denote by $\Pi(\Omega,\mathcal{F})$ the set of countable partitions on $\Omega$ to $\mathcal{F}$. 


Let $E$ be a $L^{0}$-module. Given a set $C\subset E$, we call the countable concatenation closure of $C$ the set 

\[
\overline{C}^{\Pi}:=\{X\in E; \: \exists \{A_n\}_{n\in\N} \in \Pi(\Omega,\mathcal{F}) \textnormal{ with } 1_{A_n}X\in 1_{A_n}C  \}. 
\]

We say that $C$ is closed under countable concatenations on $E$,
if 
\[
C=\overline{C}^{\Pi}.
\]

\begin{prop}
Let $E\left[\tau\right]$ be a locally $L^0$-convex module and 
$U\subset E$ a $L^{0}$-convex, $L^{0}$-absorbent, $L^{0}$-balanced and closed under countable concatenations on $E$ set. Then

\[
\overset{{\scriptstyle 0}}{U}\subset\left\{ X\in E;\: p_{U}(X)<1\right\} \subset U\subset\left\{ X\in E;\: p_{U}\left(X\right)\leq1\right\} 
\]
\end{prop}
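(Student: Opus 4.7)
The plan is to establish the three inclusions separately. The leftmost inclusion $\overset{0}{U}\subset\{X\in E:\, p_U(X)<1\}$ is already provided by the previous proposition, so I would simply invoke it. The rightmost inclusion $U\subset\{X\in E:\, p_U(X)\leq 1\}$ is immediate from the definition of $p_U$ as an essential infimum: every $X\in U$ satisfies $X=1\cdot X\in 1\cdot U$, so the constant $1$ belongs to $\{Y\in L_+^{0}:\, X\in YU\}$, forcing $p_U(X)\leq 1$.

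The real content lies in the middle inclusion $\{X\in E:\, p_U(X)<1\}\subset U$, which is where the assumption of closure under countable concatenations is used. I would fix $X$ with $p_U(X)<1$ and apply item 5 of Proposition \ref{prop: gaugeConAbs} to obtain a sequence $\{Z_n\}\subset L_{++}^{0}$ with $Z_n\searrow p_U(X)$ almost surely and $X\in Z_n U$ for every $n$, say $X=Z_nY_n$ with $Y_n\in U$. Since the limit $p_U(X)$ is strictly less than $1$, I would build a partition $\{A_n\}\in\Pi(\Omega,\mathcal{F})$ by setting
\[
A_n:=\{Z_n<1\}\setminus\{Z_{n-1}<1\}
\]
(with the convention $Z_0=+\infty$). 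The monotonicity of $\{Z_n\}$ makes these sets disjoint, and the hypothesis $p_U(X)<1$ ensures they cover $\Omega$ up to a null set.

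Now on $A_n$ one has $Z_n<1$, so $|1_{A_n}Z_n|\leq 1$ globally. Then the identity
\[
1_{A_n}X=(1_{A_n}Z_n)\,Y_n
\]
combined with $L^{0}$-balancedness of $U$ gives $(1_{A_n}Z_n)Y_n\in U$. Multiplying once more by $1_{A_n}$ on both sides yields $1_{A_n}X\in 1_{A_n}U$, so $X\in\overline{U}^{\Pi}=U$ by the closure hypothesis.

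The only delicate point I expect is the construction of the partition: the convergence $Z_n\searrow p_U(X)$ is only almost sure, so $\bigcup_n\{Z_n<1\}$ may miss a null set, which must be absorbed into one of the $A_n$'s (harmlessly, since we work with equivalence classes). Everything else is a straightforward application of Proposition \ref{prop: gaugeConAbs} and the three defining properties of $U$, with the countable concatenation closure doing the decisive work at the end.
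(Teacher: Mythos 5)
Your proof is correct, and its skeleton coincides with the paper's: the paper likewise reduces to the middle inclusion (treating the outer two as known, exactly as you do), takes the sequence from item 5 of Proposition \ref{prop: gaugeConAbs}, partitions $\Omega$ according to where the scalars first drop below $1$, and invokes closure under countable concatenations. Where you diverge is the endgame. The paper defines the aggregated scalar $Y:=\sum_n Y_n 1_{A_n}$ (its $Y_n$ are your $Z_n$), shows $\frac{X}{Y}\in\overline{U}^{\Pi}=U$, and then recovers $X$ via the $L^0$-convex combination $X=Y\cdot\frac{X}{Y}+(1-Y)\cdot 0$, which requires $p_U(X)\leq Y\leq 1$ and $0\in U$. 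You instead absorb each scalar piecewise: since $\left|1_{A_n}Z_n\right|\leq 1$, $L^0$-balancedness already puts $\left(1_{A_n}Z_n\right)Y_n$ in $U$, whence $1_{A_n}X\in 1_{A_n}U$ and $X\in\overline{U}^{\Pi}=U$ directly. Your variant is slightly leaner: no division by $Y$, no aggregated scalar, and no convexity in the final step (convexity still enters, but only through the hypotheses of Proposition \ref{prop: gaugeConAbs}), whereas the paper's route makes the role of $L^0$-convexity explicit. A side benefit of your write-up: your partition $A_n=\left\{Z_n<1\right\}\setminus\left\{Z_{n-1}<1\right\}$ (with $Z_0=+\infty$) is the corrected form of the paper's recursion $A_n:=\left(Y_n<1\right)-A_{n-1}$, which as literally written fails to produce disjoint sets from $n=3$ onward (e.g. $A_3=\left(Y_3<1\right)\setminus A_2$ re-includes $A_1$); and your remark about absorbing into some $A_n$ the null set where $Z_n\searrow p_U(X)$ may fail is exactly the right care, which the paper passes over silently.
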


\begin{proof}
It suffices to show that $\left\{ X\in E;\: p_{U}(X)<1\right\} \subset U$. Indeed,  let $X\in E$ be such that $p_{U}(X)<1$. By proposition \ref{prop: gaugeConAbs} there exists a sequence $\left\{ Y_{n}\right\} _{n\in\mathbb{N}}$ in $L_{++}^{0}$ such that $X\in Y_{n}U$ and $Y_{n}\searrow p_{U}(X)$.
In this way, we consider the sequence of sets $A_{0}:=\phi,\, A_{n}:=\left(Y_{n}<1\right)-A_{n-1}$ for $n>0$. Thus, $A_{n\in\N}$ is a partition of $\Omega$ and we define $Y:=\underset{n\in\mathbb{N}}{\sum}Y_{n}1_{A_{n}}\in L_{++}^{0}$.
Then, for each $n\in\N$, $1_{A_n}\frac{X}{Y}=1_{A_n}\frac{X}{Y_n}\in 1_{A_n}U.$ 
Hence, $\frac{X}{Y}\in  \overline{U}^\Pi=U$ as $U$
is closed under countable concatenations on $E$. 

Thereby, it is fulfilled that $p_{U}(X)\leq Y\leq1$. Thus, the convexity of $U$ implies $X=Y\cdot\frac{X}{Y}+(1-Y)\cdot0\in U$.  
\end{proof}

In the theorem below, we provide a characterization of the topological $L^0$-modules whose topology is induced by a family of $L^0$-seminorms. This statement differs from the theorem  2.4 of \cite{key-1} in requiring an extra condition over the elements of the neighborhood base of $0\in{E}$, namely, being closed under countable concatenations on $E$. 

\begin{thm}
\label{thm: caracterizacion}
Let $E\left[\tau\right]$ be a topological $L^{0}$-module. Then $\tau$ is induced by a family of $L^0$-seminorms if and only if there is a neighborhood base of $0\in{E}$ for which each $U\in\mathcal{U}$ is 
\begin{enumerate}
\item $L^{0}$-convex,
\item $L^{0}$-absorbent,
\item $L^{0}$-balanced and
\item closed under countable concatenations on $E$, i.e, $U=\overline{U}^\Pi$.
\end{enumerate}
\end{thm}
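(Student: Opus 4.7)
The plan is to split the proof into the two directions of the ``if and only if''.

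For the forward direction, I would assume $\tau$ is induced by a family $\mathcal{P}$ of $L^0$-seminorms and verify the four conditions on the canonical base $\{U_{Q,\varepsilon}\}$ with $Q\subset\mathcal{P}$ finite and $\varepsilon\in L_{++}^{0}$. The $L^{0}$-convexity and $L^{0}$-balancedness of each $U_{Q,\varepsilon}$ follow from subadditivity and homogeneity of a seminorm, while $L^{0}$-absorbency holds by taking $Y:=\varepsilon^{-1}\bigl(\sup_{\|\cdot\|\in Q}\|X\|+\varepsilon\bigr)\in L_{++}^{0}$, which gives $\|X\|\le Y\varepsilon$ for every $\|\cdot\|\in Q$. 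The only condition needing more than a one-line check is closure under countable concatenations: given a partition $\{A_n\}\in\Pi(\Omega,\mathcal{F})$ with $1_{A_n}X=1_{A_n}X_n$ for some $X_n\in U_{Q,\varepsilon}$, the identity $1_{A_n}\|X\|=\|1_{A_n}X_n\|=1_{A_n}\|X_n\|\le 1_{A_n}\varepsilon$, combined with $\sum_n 1_{A_n}=1$, yields $\|X\|\le\varepsilon$ for every $\|\cdot\|\in Q$, so $X\in U_{Q,\varepsilon}$.

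For the reverse direction, I would let $\mathcal{U}$ be a base of $0$-neighborhoods satisfying (1)--(4). By Proposition \ref{prop: gaugeConAbs} each gauge $p_U$ is an $L^0$-seminorm; I would set $\mathcal{P}:=\{p_U:U\in\mathcal{U}\}$ and show that the $\mathcal{P}$-induced topology coincides with $\tau$ by exhibiting mutual containments of basic $0$-neighborhoods. One containment follows at once from the preceding proposition: since $\{X:p_U(X)\le 1/2\}\subset\{X:p_U(X)<1\}\subset U$, every $U\in\mathcal{U}$ contains a basic $\mathcal{P}$-neighborhood of $0$. For the other containment, given a basic $\mathcal{P}$-neighborhood $U_{Q,\varepsilon}$ with $Q=\{p_{U_1},\ldots,p_{U_n}\}$, the same proposition yields $U_i\subset\{X:p_{U_i}(X)\le 1\}$, and homogeneity of $p_{U_i}$ gives $\varepsilon U_i\subset\{X:p_{U_i}(X)\le\varepsilon\}$; because $X\mapsto\varepsilon^{-1}X$ is continuous in the module topology, each $\varepsilon U_i$ is a $\tau$-neighborhood of $0$, hence $\bigcap_{i=1}^n\varepsilon U_i$ is a $\tau$-neighborhood of $0$ contained in $U_{Q,\varepsilon}$.

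The main obstacle is the inclusion $\{X:p_U(X)<1\}\subset U$, which is exactly where the countable-concatenation hypothesis is indispensable and where the earlier proof of Theorem 2.4 in \cite{key-1} breaks down; however, this inclusion has already been established in the preceding proposition, so the remaining work in the theorem reduces to bookkeeping between the two neighborhood bases and the routine verification of the four conditions for the standard seminorm neighborhoods.
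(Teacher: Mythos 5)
Your proposal is correct and follows essentially the same route as the paper: in one direction you verify the four properties of the canonical neighborhoods $U_{Q,\varepsilon}$ with exactly the paper's concatenation computation $\left\Vert X\right\Vert=\sum_{n}1_{A_{n}}\left\Vert X_{n}\right\Vert\leq\varepsilon$, and in the other you take the gauge family $\left\{ p_{U}\right\}_{U\in\mathcal{U}}$ and establish mutual refinement via $U_{p_{U},\frac{1}{2}}\subset\left\{ X\in E;\: p_{U}(X)<1\right\}\subset U$ (where the countable-concatenation hypothesis enters, via the preceding proposition) together with continuity of scalar multiplication. Your only deviations are cosmetic: you spell out the $L^{0}$-absorbency witness and treat finite $Q$ explicitly by intersecting the sets $\varepsilon U_{i}$, whereas the paper argues for a single seminorm by choosing $U'\in\mathcal{U}$ with $\frac{1}{\varepsilon}U'\subset U$ and leaves the finite intersection implicit.
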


\begin{proof}
Suppose that $\tau$ is induced by a family of $L^0$-seminorms. 
If $Q\subset\mathcal{P}$ is finite and $\varepsilon\in L_{++}^{0}$, by inspection follows that $B_{Q,\varepsilon}$ is $L^{0}$-convex, $L^{0}$-absorbent and $L^{0}$-balanced.
Besides, $B_{Q,\varepsilon}$ is closed under countable concatenations on $E$.
Indeed, if $X=\sum_{n}1_{A_{n}}X_{n}$ with $X_{n}\in B_{Q,\varepsilon}$
for all $n\in\mathbb{N}$ and $\left\{ A_{n}\right\} _{n\in\mathbb{N}}$
is a partition of $\Omega$ with $A_n\in\mathcal{F}$ it holds for
 $\left\Vert \cdot\right\Vert \in Q$ that
\[
\left\Vert X\right\Vert =\left(\sum_{n}1_{A_{n}}\right)\left\Vert X\right\Vert =\sum_{n}1_{A_{n}}\left\Vert X\right\Vert =
\]

\[
=\sum_{n}\left\Vert 1_{A_{n}}X\right\Vert =\sum_{n}\left\Vert 1_{A_{n}}X_{n}\right\Vert =\sum_{n}1_{A_{n}}\left\Vert X_{n}\right\Vert \leq\varepsilon.
\]

Reciprocally, let $\mathcal{U}$ be a neighborhood base of $0\in{E}$ for which each $U\in\mathcal{U}$ is 
$L^{0}$-convex, $L^{0}$-absorbent, $L^{0}$-balanced and closed under countable concatenations on $E$.
Let us consider the family of $L^0$-seminorms $\left\{ p_{U}\right\} _{U\in\mathcal{U}}$ and let us show that it induces the topology $\tau$.
 Given $U\in\mathcal{U}$ is clear that $U\subset U_{p_{U},1}$. Therefore, for $\varepsilon\in L_{++}^{0}$ there exists $U\text{'\ensuremath{\in\mathcal{U}}}$ such that $\frac{1}{\varepsilon}U'\subset U\subset U_{p_{U},1}$ due to the continuity of product.
Thus, $U'\subset U_{p_{U},\varepsilon}$. 
On the other hand, for $U\in\mathcal{U}$, it is holds that $U_{p_{U},\frac{1}{2}}\subset\left\{ X\in E;\: p_{U}(X)<1\right\} \subset U$.
\end{proof}

Taking advantage of the ideas of the last theorem, we provide an example of a locally $L^0$-convex module, whose topology is not induced by any family of $L^0$-seminorms.

\begin{example}
\label{contraejemplo}
Given a probabilistic space $(\Omega,\mathcal{F},P)$ and an infinite partition $\left\{ A_{n}\right\}_{n\in\mathbb{N}}$
of $\Omega$ with $A_{n}\in\mathcal{F}$ and $P(A_n)>0$ (for example, $\Omega=(0,1)$, $\mathcal{F}=B((0,1))$, $A_{n}=[\frac{1}{2^n},\frac{1}{2^{n-1}})$ with $n\in\mathbb{N}$ and $P$ the Lebesgue measure). 

Let $\varepsilon\in L_{++}^{0}$ be, we define the set 

\[
U_{\varepsilon}:=\left\{ Y\in L^{0};\:\exists\, I\subset\mathbb{N}\textnormal{ finite, }\left|Y1_{A_{i}}\right|\leq\varepsilon\:\forall\, i\in\mathbb{N}-I\right\}.
\]

Then, it is easily shown that $U_{\varepsilon}$ is
$L^{0}$-convex, $L^{0}$-absorbent and $L^{0}$-balanced, and
$\mathcal{U}:=\left\{ U_{\varepsilon};\:\varepsilon\in L_{++}^{0}\right\} $
is a neighborhood base of $0\in{E}$ which generates a topology for which $L^{0}$ is a topological $L^{0}$-module.

Furthermore, it holds that $U_{\varepsilon}$ is not closed under countable concatenations on $L^{0}$.

Indeed, it is verified that $\varepsilon+1\notin U_{\varepsilon}$, but
$\varepsilon+1=\sum_{n}\left(\varepsilon+1\right)1_{A_{n}}$ with $\left(\varepsilon+1\right)1_{A_{n}}\in U_{\varepsilon}$.

Easily, it can be shown that any neighborhood base of $0\in{E}$ generating the same topology verified that its elements are not closed under countable concatenations on $L^{0}$.

Therefore, due to theorem \ref{thm: caracterizacion}, $L^{0}$, endowed with the topology generated by $\mathcal{U}$, is a locally $L^0$-convex module, whose topology is not induced by any family of $L^0$-seminorms.

Besides, it has to be met that $\left\{ X\in L^{0};\: p_{U_{\varepsilon}}\left(X\right)<1\right\} \nsubseteq U_{\varepsilon}$
for some $\varepsilon\in L_{++}^{0}$.  Otherwise, the family of $L^0$-seminorms
 $\left\{ P_{U_{\varepsilon}}\right\}_{\varepsilon\in L_{++}^{0}}$ would induce the topology.

In fact, we claim that $p_{U}\left(X\right)=0$ for all $X\in{L^0}$ and $U\in\mathcal{U}$.
It suffices to show that $p_{U_1}\left(1\right)=0$, since $p_{U_1}$ is a $L^0$-seminorm. By way of contradiction, assume $p_{U_1}\left(1\right)>0$. Then, there exists $m\in\N$ 
such that $P\left[\left(p_{U_1}\left(1\right)>0\right)\cap{A_m}\right]>0$.
Define $A:=\left(p_{U_1}\left(1\right)>0\right)\cap{A_m}$, $\nu:=\frac{p_{U_1\left(1\right)}+1_{A^c}}{2}$, $Y:=1_{A^c}+\nu{1_A}$ and $X:=1_{A^c}+\frac{1}{\nu}{1_A}$. 
Thus, we have $1=Y{X}\in{YU_1}$ and $P\left(p_{U_1}\left(1\right)>Y\right)>0$. We have a contradiction. 

\end{example}

Theorem \ref{thm: caracterizacion} gives rise to give a new more restrictive definition of locally $L^0$-convex module.
 In the following, a locally $L^0$-convex module will be as definition below says.

\begin{defn}
A locally $L^0$-convex module is a $L^{0}$-module such that there is a neighborhood base of $0\in{E}$ for which each $U\in\mathcal{U}$ is 
\begin{enumerate}
\item $L^{0}$-convex,
\item $L^{0}$-absorbent,
\item $L^{0}$-balanced and
\item closed under countable concatenations on $E$.
\end{enumerate}
\end{defn}

Thus, under this new definition a $L^{0}$-module $E\left[\tau\right]$ is a locally $L^0$-convex module if and only if $\tau$ is induced by a family of $L^0$-seminorms.

\begin{prop}
\label{prop: caracterizacion}
A $L^0$-module $E\left[\tau\right]$ is locally $L^0$-convex if and only if $\tau$ is induced by a family of $L^0$-seminorms.
\end{prop}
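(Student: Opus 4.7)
The plan is to observe that Proposition \ref{prop: caracterizacion} is essentially a reformulation of Theorem \ref{thm: caracterizacion} in light of the newly adopted definition of locally $L^0$-convex module. Under the refined definition given just above, an $L^0$-module $E[\tau]$ is declared locally $L^0$-convex precisely when there exists a neighborhood base of $0\in E$ whose elements are $L^0$-convex, $L^0$-absorbent, $L^0$-balanced, and closed under countable concatenations on $E$. These four conditions coincide verbatim with the hypotheses of Theorem \ref{thm: caracterizacion}, so I expect no additional work beyond citing the theorem.

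For the forward direction I would assume $E[\tau]$ is locally $L^0$-convex in the new sense, unfold the definition to obtain a neighborhood base $\mathcal{U}$ of $0\in E$ in which every $U\in\mathcal{U}$ satisfies the four listed properties, and then invoke Theorem \ref{thm: caracterizacion} to conclude that $\tau$ is induced by a family of $L^0$-seminorms (concretely by $\{p_U\}_{U\in\mathcal{U}}$, as in the second half of that theorem's proof). For the converse, I would suppose that $\tau$ is induced by some family $\mathcal{P}$ of $L^0$-seminorms and propose the canonical base $\{U_{Q,\varepsilon}\}$ with $Q\subset\mathcal{P}$ finite and $\varepsilon\in L^0_{++}$. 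The first half of the proof of Theorem \ref{thm: caracterizacion} already checks that each $U_{Q,\varepsilon}$ is $L^0$-convex, $L^0$-absorbent, $L^0$-balanced, and closed under countable concatenations on $E$, so this base witnesses that $E[\tau]$ is locally $L^0$-convex under the new definition.

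Since both directions are absorbed by Theorem \ref{thm: caracterizacion}, there is really no obstacle to overcome; the proposition is a convenient restatement in which the equivalence ``locally $L^0$-convex $\Longleftrightarrow$ topology induced by $L^0$-seminorms'' becomes tautological. The substantive mathematical content has already been extracted in the characterization theorem, and the role of Proposition \ref{prop: caracterizacion} is simply to record that, with the corrected definition, the classical-looking equivalence is restored. Accordingly, the proof I would write consists of two short paragraphs, each ending with an explicit appeal to Theorem \ref{thm: caracterizacion}.
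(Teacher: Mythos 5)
Your proposal is correct and matches the paper exactly: the paper offers no separate argument for Proposition \ref{prop: caracterizacion}, treating it as an immediate restatement of Theorem \ref{thm: caracterizacion} once the new definition of locally $L^0$-convex module is adopted, which is precisely your reading. Both of your directions (unfolding the new definition and citing the theorem, and conversely using the base $\{U_{Q,\varepsilon}\}$ already verified in the theorem's proof) are exactly what the paper's preceding sentence asserts.
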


\end{document}